\title[On the gcd of $n$ and $F_n$, II]{On the greatest common divisor of $n$ and\\ the $n$th Fibonacci number, II}
\subjclass[2010]{Primary: 11B39, Secondary: 11A05, 11N25}
\keywords{Fibonacci numbers; greatest common divisor; rank of appearance; upper bound}
\author[A.~Jha]{Abhishek Jha}
\address{\parbox{\linewidth}{
Indraprastha Institute of Information Technology,\\
Okhla Industrial Estate, Phase-3, New Delhi, India}}
\email{abhishek20553@iiitd.ac.in}
\author[C.~Sanna]{Carlo Sanna$^\dagger$}
\address{\parbox{\linewidth}{
Department of Mathematical Sciences, Politecnico di Torino\\
Corso Duca degli Abruzzi 24, 10129 Torino, Italy\\[-8pt]}}
\email{carlo.sanna.dev@gmail.com}
\thanks{$\dagger$ C.~Sanna is a member of GNSAGA of INdAM and of
CrypTO, the group of Cryptography and Number~Theory of Politecnico di
Torino.}
\newtheorem{theorem}{Theorem}[section]
\newtheorem{lemma}[theorem]{Lemma}
\theoremstyle{remark}
\newtheorem{remark}{Remark}[section]
\DeclareMathOperator{\lcm}{lcm}
\DeclareMathOperator{\Li}{Li}
\begin{document}

\begin{abstract}
Let $\mathcal{A}$ be the set of all integers of the form $\gcd(n, F_n)$, where $n$ is a positive integer and $F_n$ denotes the $n$th Fibonacci number. Leonetti and Sanna proved that $\mathcal{A}$ has natural density equal to zero, and asked for a more precise upper bound. We prove that
\begin{equation*}
\#\big(\mathcal{A} \cap [1, x]\big) \ll \frac{x \log \log \log x}{\log \log x}
\end{equation*}
for all sufficiently large $x$.
\end{abstract}

\maketitle

\section{Introduction}

Let $(u_n)$ be a nondegenerate linear recurrence with integral values.
Arithmetic relations between $n$ and $u_n$ have been studied by several authors.
For example, the set of positive integers such that $n$ divides $u_n$ has been studied by Alba~Gonz\'{a}lez, Luca, Pomerance, and Shparlinski~\cite{MR2928495}, assuming that the characteristic polynomial of $(u_n)$ is separable, and by Andr\'e-Jeannin~\cite{MR1131414}, Luca and Tron~\cite{MR3409327}, Sanna~\cite{MR3606950}, and Somer~\cite{MR1271392}, when $(u_n)$ is a Lucas sequence.
Furthermore, Sanna~\cite{MR3935356} showed that the set of natural numbers $n$ such that $\gcd(n, u_n) = 1$ has a natural density (see~\cite{MR3896876} for a generalization).
Mastrostefano and Sanna~\cite{MR3983305,MR3825473} studied the moments of $\log\!\big(\!\gcd(n, u_n)\big)$ and $\gcd(n, u_n)$ when $(u_n)$ is a Lucas sequence, and Jha and Nath~\cite{JhaNath22} performed a similar study over shifted primes.
(See also the survey of Tron~\cite{MR4159096} on greatest common divisors of terms of linear recurrences.)

Let $(F_n)$ be the linear recurrence of Fibonacci numbers, which is defined by $F_1 = F_2 = 1$ and $F_{n + 2} = F_{n + 1} + F_n$ for every positive integer $n$.
Sanna and Tron~\cite{MR3800663} proved that, for each positive integer $k$, the set of positive integers $n$ such that $\gcd(n, F_n) = k$ has a natural density, which is given by an infinite series. 
Kim~\cite{MR4017936} and Jha~\cite{Jha21} obtained formally analogous results in cases of elliptic divisibility sequences and orbits of polynomial maps, respectively.
Let $\mathcal{A}$ be the set of numbers of the form $\gcd(n, F_n)$, for some positive integer $n$.
Leonetti and Sanna~\cite{MR3859754} provided an effective method to enumerate the elements of $\mathcal{A}$ in increasing order.
In particular, the first elements of $\mathcal{A}$ are
\begin{equation*}
1, \quad 2, \quad 5, \quad 7, \quad 10, \quad 12, \quad 13, \quad 17, \quad 24, \quad 25, \quad 26, \quad 29, \quad 34, \quad 35, \quad 36, \quad \dots
\end{equation*}
see~\cite[A285058]{OEIS} for more terms.
Then they proved that 
\begin{equation}\label{equ:lbound}
\#\mathcal{A}(x) \gg \frac{x}{\log x}
\end{equation}
for all $x \geq 2$. Their approach relied on a result of Cubre and Rouse~\cite{MR3251719}, which in turn follows from Galois theory and the Chebotarev density theorem. Later, Jha and Sanna~\cite[Proposition 1.4]{JhaSanna22} obtained an elementary proof as an application of related arithmetic problem over shifted primes. Leonetti and Sanna~\cite{MR3859754} also gave the upper bound $\#\mathcal{A}(x) = o(x)$ as $x \to +\infty$; and asked for a more precise estimate.
We prove the following upper bound on $\#\mathcal{A}(x)$.

\begin{theorem}\label{thm:ubound}
We have
\begin{equation*}
\#\mathcal{A}(x) \ll \frac{x \log \log \log x}{\log \log x}
\end{equation*}
for all sufficiently large $x$.
\end{theorem}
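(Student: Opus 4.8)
The plan is to recast membership in $\mathcal A$ as a divisibility condition on the Fibonacci rank of appearance and then to estimate the resulting set with a sieve. Write $z(m)$ for the rank of appearance of $m$, i.e.\ the least positive integer $n$ with $m \mid F_n$; recall the standard facts that $m \mid F_n$ if and only if $z(m) \mid n$, that $a \mid b$ implies $z(a) \mid z(b)$, that $z(\lcm(a,b)) = \lcm(z(a), z(b))$, that $z(p) \mid p - \left(\tfrac{p}{5}\right)$ for every prime $p \neq 5$ (hence $z(p) \leq p+1$), and the explicit formulas for the $p$-adic valuation of $F_n$. For a positive integer $k$ set $m_k := \lcm\!\big(k, z(k)\big)$.

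\textbf{Step 1: a structural reduction.} I would first show
\begin{equation*}
k \in \mathcal A \iff \gcd\!\big(m_k, F_{m_k}\big) = k .
\end{equation*}
The implication $\Leftarrow$ is immediate, taking $n = m_k$. For $\Rightarrow$, suppose $k = \gcd(n, F_n)$. Then $k \mid n$, and $k \mid F_n$ gives $z(k) \mid n$, so $m_k \mid n$; hence $F_{m_k} \mid F_n$, and any common divisor of $m_k$ and $F_{m_k}$ divides both $n$ and $F_n$, so $\gcd(m_k, F_{m_k}) \mid \gcd(n, F_n) = k$. Conversely $k \mid m_k$ and $z(k) \mid m_k$ force $k \mid F_{m_k}$, whence $k \mid \gcd(m_k, F_{m_k})$; equality follows. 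Since any prime $q$ with $q \mid z(k)$, $q \nmid k$ and $z(q) \mid m_k$ would divide $\gcd(m_k, F_{m_k}) = k$ — a contradiction — one obtains the necessary condition
\begin{equation*}
k \in \mathcal A \ \Longrightarrow\ z(q) \nmid m_k \ \text{ for every prime } q \text{ with } q \mid z(k) \text{ and } q \nmid k ,
\end{equation*}
and it is this condition that I would feed to a sieve (a fortiori, $z(q) \nmid z(k)$ for all such $q$).

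\textbf{Step 2: sieving out the typical $k$.} First discard the $\lfloor\log x\rfloor$-smooth integers $k \leq x$: there are only $\Psi(x, \log x) = x^{o(1)}$ of them. Every remaining $k$ has a prime factor exceeding $\log x$, so, by monotonicity of $z$, the integer $z(k) = \lcm_{p^a \| k} z(p^a)$ has a correspondingly rich factorisation, and one wants to show that for all but $O\!\big(x \log\log\log x / \log\log x\big)$ such $k$ the condition of Step 1 \emph{fails}, i.e.\ there is a prime $q$ with $q \mid z(k)$, $q \nmid k$ and $z(q) \mid m_k$. For a fixed prime $q$, the event $q \mid z(k)$ is precisely the event that $k$ has a prime factor $p$ with $q \mid z(p)$; the set $\{p : q \mid z(p)\}$ is a union of Chebotarev classes of density $\asymp 1/q$ and is, alternatively, governed by congruences $q \mid p \pm 1$, so it can be handled in the elementary spirit of \cite{JhaSanna22}. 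Restricting to primes $q$ for which $z(q)$ is sufficiently smooth — so that, once $q \mid z(k)$, the extra requirement $z(q) \mid m_k$ costs essentially nothing — a Brun-type sieve bounds the number of $k \leq x$ that simultaneously avoid prime factors in \emph{all} of the corresponding classes; by Mertens' theorem this surviving proportion is $\asymp 1/\log\log x$, while allowing one ``exceptional'' prime factor multiplies it by $\asymp \sum_q 1/q \asymp \log\log\log x$, yielding the theorem.

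\textbf{The main obstacle} is the whole of Step 2: the reduction in Step 1 is elementary, but making the sieve both uniform and sharp is delicate. One must (i) control not merely the density of each class $\{p : q \mid z(p)\}$ but the \emph{joint} behaviour of these conditions over a range of $q$ reaching a power of $\log x$, with error terms good enough for a sieve of that level — this is where Chebotarev (or its elementary shifted-prime substitute) is needed quantitatively; (ii) fix the smoothness threshold for $z(q)$ and estimate how often, given $q \mid z(k)$, one nonetheless has $z(q) \nmid m_k$, which forces a careful separation of the arithmetic of $k$ from that of $z(k) = \lcm_{p^a \| k} z(p^a)$, a quantity depending on the entire factorisation of $k$ rather than on $k$ in a single residue class; and (iii) keep the counting tight enough that the loss from the single exceptional prime is of order exactly $\log\log\log x$ and that no factor of $\log\log x$ is given away. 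In short, the content of the theorem lies entirely in this uniform sieve estimate.
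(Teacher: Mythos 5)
Your Step 1 is correct and is essentially Lemma 2.1(ii)--(iii) of the paper; the necessary condition you extract from it (if $q \mid z(k)$, $q \nmid k$ and $z(q) \mid m_k$, then $q \mid \gcd(m_k, F_{m_k}) = k$, a contradiction) is indeed the right mechanism. The genuine gap is in Step 2, and you have in effect named it yourself: you never dispose of the condition $z(q)\mid m_k$. For a prime $q \neq 5$ one has $z(q)\mid q\pm 1$, so $z(q)\neq q$ and $z(q)\mid m_k$ is a nontrivial constraint on the entire factorization of $k$: if $z(q)$ has a prime-power factor $r^a$ with $r \gg \log\log x$, then for almost all $k$ neither $r^a\mid k$ nor $r^a\mid z(k)$, so that $q$ is useless. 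Restricting to $q$ with $z(q)$ smooth does not make this constraint ``cost essentially nothing,'' and you give no estimate for how many usable $q$ survive, nor for the joint sieve over the resulting classes. The closing accounting (``Mertens gives $1/\log\log x$, one exceptional prime gives $\log\log\log x$'') does not correspond to an actual argument; in the paper that factor arises from balancing two error terms, not from a Mertens-plus-exception count.

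The paper closes exactly this gap by using not a prime $q$ but the prime power $d = 5^j$, with $j$ chosen so that $d \asymp \log\log x/\log\log\log x$. Since $z(5^j)=5^j$, one has $\ell(d)=d$, so the troublesome condition ``$z(d)\mid m_n$'' is automatic: if $n\in\mathcal{A}$ has a prime factor $p$ with $d\mid z(p)$, then $d\mid z(n)\mid \ell(n)$, hence $\ell(d)\mid\ell(n)$, and Lemma 2.1(iv) forces $d\mid n$. Thus the elements of $\mathcal{A}(x)$ having a prime factor in $\mathcal{P}_d$ number at most $x/d$, while those having none are bounded by the sieve together with the uniform estimate $\sum_{p\in\mathcal{P}_d(x)}1/p=\delta(d)\log\log x+O(\log(2d)/\varphi(d))$ (this is where the effective Chebotarev-type input you flag in point (i) is actually needed), giving $\ll x/(\log x)^{\delta(d)}$; the choice of $j$ balances the two contributions at $x\log\log\log x/\log\log x$. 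Your framework cannot be completed with single primes $q$, because $5$ is the only prime with $z(q)=q$ and it is of bounded size; passing to powers of $5$ is the missing idea.
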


In light of the gap between the upper bound of Theorem~\ref{thm:ubound} and the lower bound~\eqref{equ:lbound} it is natural to wonder which is the true order of $\#\mathcal{A}(x)$.
By performing some numerical experiments (see Section~\ref{sec:numerical} later), we found that $\#\mathcal{A}(x)$ appears to be asymptotic to $x / (\log x)^c$, as \mbox{$x \to +\infty$}, for some constant $c \approx 0.63$, see Figure~\ref{fig:asymp}.
Of course, these kind of experiments has to be taken with a grain of salt, since they cannot reveal slow-growing  factors like $\log \log x$.

\begin{figure}[ht]
    \centering
    \includegraphics[width=0.9\textwidth]{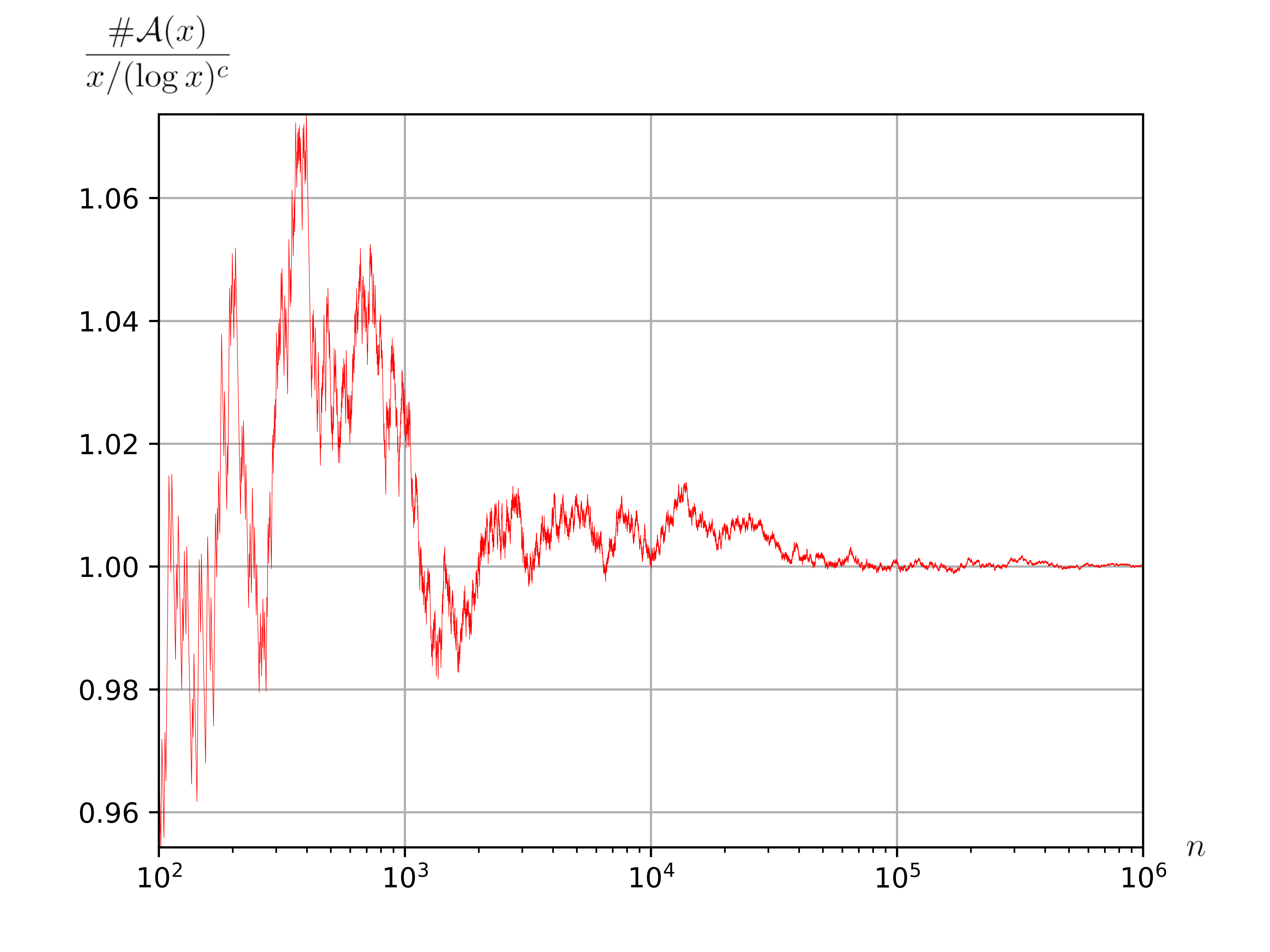}
    \caption{A plot of $\#\mathcal{A}(x) / (x / (\log x)^c)$ for $x$ up to $10^6$.}
    \label{fig:asymp}
\end{figure}

\subsection*{Notation}

For every set of positive integers $\mathcal{S}$ and for every $x > 0$, we define $\mathcal{S}(x) := \mathcal{S} \cap [1, x]$.
We employ the Landau--Bachmann ``Big Oh'' and ``little oh'' notation $O$ and $o$, as well as the associated Vinogradov symbols $\ll$ and $\gg$.
In particular, all of the implied constants are intended to be absolute.
We let $\Li(x) := \int_2^x (\log t)^{-1}\,\mathrm{d}t$ denote the integral logarithm.

\section{Preliminaries}

For each positive integer $n$, let $z(n)$ be the \emph{rank of appearance} of $n$, that is, $z(n)$ is the smallest positive integer $k$ such that $n$ divides $F_k$.
It is well known that $z(n)$ exists.
Moreover, put $\ell(n) := \lcm\!\big(n, z(n)\big)$ and $g(n) := \gcd\!\big(n, F_{n}\big)$.
The next lemma collects some elementary properties of $z$, $\ell$, and $g$.

\begin{lemma}\label{lem:basic}
For all positive integer $m,n$ and all prime numbers $p$, we have:
\begin{enumerate}[label={\rm (\roman{*})},itemsep=0.5em]
\item \label{item1b} $z(m) \mid z(n)$ whenever $m \mid n$.
\item \label{item2b} $n \mid g(m)$ if and only if $\ell(n) \mid m$.
\item \label{item3b} $n \in \mathcal{A}$ if and only if $n = g\big(\ell(n)\big)$.
\item \label{item4b} $m \mid n$ whenever $\ell(m) \mid \ell(n)$ and $n \in \mathcal{A}$.
\item \label{item5b} $z(p)\mid p - (p/5)$ where $(p/5)$ is a Legendre symbol.
\item \label{item6b} $z(p^n)=p^{\max(n-e(p)\,,\,0)}\,z(p)$, where $e(p) := \nu_p(F_{z(p)})\ge 1$ and $\nu_p$ is the usual p-adic valuation. 
\item \label{item7b} $\ell(p^n)=p^n z(p)$ if $p\neq 5$, and $\ell(5^n)=5^n$.
\end{enumerate}
\end{lemma}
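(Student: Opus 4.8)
The plan is to derive the whole lemma from two classical facts about the Fibonacci sequence: the strong divisibility property $\gcd(F_m, F_n) = F_{\gcd(m,n)}$, which in particular gives $F_a \mid F_b$ whenever $a \mid b$, and the resulting characterization of the rank of appearance,
\[
m \mid F_k \quad \Longleftrightarrow \quad z(m) \mid k ,
\]
valid for all positive integers $m, k$. I would establish this characterization first: the implication $\Leftarrow$ follows from $F_{z(m)} \mid F_k$ together with $m \mid F_{z(m)}$, while $\Rightarrow$ follows by writing $m \mid \gcd(F_k, F_{z(m)}) = F_{\gcd(k, z(m))}$ and invoking the minimality of $z(m)$ to force $\gcd(k, z(m)) = z(m)$.

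Given this characterization, parts (i)--(iv) reduce to short divisibility manipulations. For (i), if $m \mid n$ then $m \mid F_{z(n)}$, so the characterization gives $z(m) \mid z(n)$. For (ii), I would simply unwind $n \mid g(m) \Leftrightarrow (n \mid m \text{ and } n \mid F_m) \Leftrightarrow (n \mid m \text{ and } z(n) \mid m) \Leftrightarrow \ell(n) \mid m$. For (iii) and (iv) the key auxiliary observation is that $g$ is monotone for divisibility, i.e.\ $a \mid b \Rightarrow g(a) \mid g(b)$, which holds because $g(a) \mid a \mid b$ and $g(a) \mid F_a \mid F_b$. Then for (iii): if $n = g(m) \in \mathcal{A}$, part (ii) gives $\ell(n) \mid m$, hence $g(\ell(n)) \mid g(m) = n$ by monotonicity, while $n \mid g(\ell(n))$ follows from (ii) applied with $\ell(n)$ in place of $m$; the converse implication is immediate since $g(\ell(n))$ is by definition a value of $g$. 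For (iv), from $\ell(m) \mid \ell(n)$ I get $m \mid \ell(n)$ and $z(m) \mid \ell(n)$, whence $m \mid \gcd(\ell(n), F_{\ell(n)}) = g(\ell(n)) = n$, the last equality being (iii).

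The genuinely arithmetic input, and the part I expect to be the main obstacle, is concentrated in (v) and (vi), where the elementary divisibility bookkeeping no longer suffices. For (v) I would recall the standard reduction of the Binet formula modulo $p$: working in $\mathbb{F}_p$ or $\mathbb{F}_{p^2}$ according to whether the discriminant $5$ is a quadratic residue, Fermat's little theorem forces $F_{p-(5/p)} \equiv 0 \pmod p$, and since $5 \equiv 1 \pmod 4$ quadratic reciprocity identifies $(5/p)$ with $(p/5)$; the characterization then yields $z(p) \mid p - (p/5)$, the case $p = 5$ being checked directly. For (vi) I would invoke the lifting-the-exponent formula for Fibonacci numbers, $\nu_p(F_{z(p)t}) = e(p) + \nu_p(t)$ for every $t \geq 1$: since $p^n \mid F_k$ forces $z(p) \mid k$, writing $k = z(p)\,t$ reduces the condition $\nu_p(F_k) \geq n$ to $\nu_p(t) \geq n - e(p)$, and minimizing $k$ gives $z(p^n) = p^{\max(n-e(p),0)} z(p)$.

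Finally, (vii) is a clean corollary of (v) and (vi). For $p \neq 5$, part (v) shows $p \nmid z(p)$, so the $p$-part of $z(p^n) = p^{\max(n-e(p),0)} z(p)$ is exactly $p^{\max(n-e(p),0)}$, and since $n \geq \max(n - e(p), 0)$ we obtain $\ell(p^n) = \lcm(p^n, z(p^n)) = p^n z(p)$. For $p = 5$ one has $z(5) = 5$ and $e(5) = \nu_5(F_5) = 1$, so (vi) gives $z(5^n) = 5^{\max(n-1,0)+1} = 5^n$ and hence $\ell(5^n) = 5^n$. The only steps requiring care are thus (v) and (vi), which rest on the behaviour of $F_n$ modulo prime powers; everything else is a formal consequence of the rank-of-appearance characterization.
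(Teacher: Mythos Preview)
Your proposal is correct and follows exactly the same logical dependencies as the paper, deriving (iv) from (ii) and (iii), and (vii) from (v) and (vi). The only difference is one of detail: the paper disposes of (i)--(iii) and (v)--(vi) by citing \cite{MR3859754} and \cite{MR3409327}, whereas you supply self-contained arguments via the strong-divisibility characterization $m\mid F_k \Leftrightarrow z(m)\mid k$ and the lifting-the-exponent formula for $\nu_p(F_{z(p)t})$.
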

\begin{proof}
For~\ref{item1b}, \ref{item2b}. and~\ref{item3b}, see~\cite[Lemma~2.1 and~2.2]{MR3859754}.
Fact~\ref{item4b} follows easily from~\ref{item2b} and~\ref{item3b}.
Facts~\ref{item6b} and ~\ref{item5b} are well known (cf.~\cite[Lemma~1]{MR3409327}).
Fact~\ref{item7b} follows quickly from~\ref{item6b} and~\ref{item5b}.
\end{proof}

For each positive integer $d$, let $\mathcal{P}_d$ be the set of prime numbers $p$ such that $d$ divides $z(p)$.
Cubre and Rouse~\cite{MR3251719} proved that $\#\mathcal{P}_d(x) \sim \delta(d) \Li(x)$, as $x \to +\infty$, where
\begin{equation*}
\delta(d) := \frac1{d}\prod_{p \,\mid\, d} \left(1 - \frac1{p^2}\right)^{-1} \begin{cases} 1 & \text{ if } 10 \nmid d; \\ 5/4 & \text{ if } d \equiv 10 \!\!\!\pmod{20}; \\ 1/2 & \text{ if } 20 \mid d . \end{cases}
\end{equation*}
Sanna~\cite{San22} extended this result to Lucas sequences (under some mild restrictions) and provided also an error term.
In particular, as a consequence of~\cite[Theorem~1.1]{San22}, we have the following asymptotic formula.

\begin{lemma}\label{lem:Pdx-asymp}
There exists an absolute constant $B > 0$ such that
\begin{equation}\label{equ:Pdx-asymp}
\#\mathcal{P}_d(x) = \delta(d) \Li(x) + O\!\left(\frac{x}{(\log x)^{12/11}}\right) ,
\end{equation}
for all odd positive integers $d$ and for all $x \geq \exp(B d^{40})$.
\end{lemma}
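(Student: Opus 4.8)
The plan is to obtain the stated formula as the Fibonacci specialization of Sanna's general theorem for Lucas sequences \cite[Theorem~1.1]{San22}. First I would record that $(F_n)$ is the nondegenerate Lucas sequence with characteristic polynomial $T^2 - T - 1$, discriminant $5$, and roots $\alpha = (1+\sqrt5)/2$ and $\beta = (1-\sqrt5)/2$ (so $\alpha/\beta$ is not a root of unity and the parameters are coprime); in particular $(F_n)$ meets the mild hypotheses under which \cite[Theorem~1.1]{San22} applies. That theorem provides, for each Lucas sequence in its scope, an asymptotic of the shape $\#\mathcal{P}_d(x) = \rho(d)\,\Li(x) + O(x/(\log x)^{12/11})$ valid uniformly in the range $x \geq \exp(B d^{40})$, where the density $\rho(d)$ is given in closed form through the degrees of the Kummer--cyclotomic division fields attached to the sequence. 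The exponents $12/11$ and $40$ are exactly the numerical constants of \cite[Theorem~1.1]{San22}, so the entire analytic estimate (the power-of-$\log$ error and the admissible range) is inherited verbatim and requires no new input here.

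The substantive step is to verify that $\rho(d)$, specialized to the parameters $(1,-1)$, coincides with the constant $\delta(d)$ written out before the lemma. For this I would invoke the standard dictionary between the divisibility $d \mid z(p)$ and splitting conditions: for $p \nmid 5$, whether $d \mid z(p)$ is determined by the Frobenius class of $p$ in $\mathrm{Gal}(\mathbb{Q}(\zeta_d,\alpha^{1/d})/\mathbb{Q})$ together with the behaviour of $p$ in $\mathbb{Q}(\sqrt5)$, so that $\rho(d)$ is a ratio $\#C/\#G$ of a suitable union of conjugacy classes over the order of the Galois group. Computing the relevant field degrees yields the ``generic'' main term $\tfrac1d\prod_{p\mid d}(1-p^{-2})^{-1}$, and the only deviation from this value comes from the entanglement $\sqrt5 \in \mathbb{Q}(\zeta_5)$ at the discriminant prime; this entanglement interacts with the $2$-part of $d$ and is precisely what produces the correction factors $5/4$ and $1/2$ appearing in $\delta(d)$ when $10 \mid d$ or $20 \mid d$.

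Since the lemma is asserted only for \emph{odd} $d$, the $2$-part of $d$ is trivial and these corrections cannot arise: if $2 \nmid d$ then neither $d \equiv 10 \pmod{20}$ nor $20 \mid d$ holds, and moreover $10 \nmid d$, so the case distinction in $\delta(d)$ collapses and one is left with $\delta(d) = \tfrac1d\prod_{p\mid d}(1-p^{-2})^{-1}$. I would therefore carry out the degree computation only in the coprime-to-$2$ setting, where the tower $\mathbb{Q} \subset \mathbb{Q}(\zeta_d) \subset \mathbb{Q}(\zeta_d,\alpha^{1/d})$ behaves uniformly across the prime powers dividing $d$, and obtain $\rho(d) = \delta(d)$ by a short multiplicativity argument over those prime powers.

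The step I expect to be most delicate is exactly this identification of the densities, rather than the analytic estimate. One must ensure that the closed form for $\rho(d)$ emerging from \cite[Theorem~1.1]{San22} for the specific parameters $(1,-1)$ is organized in the same way as $\delta(d)$, accounting correctly for the special prime $5$ and for the local factor $(1-p^{-2})^{-1}$ at each $p \mid d$; the restriction $2 \nmid d$ is what makes this bookkeeping clean, by removing the $2$--$5$ entanglement entirely. Once the equality $\rho(d) = \delta(d)$ is confirmed for odd $d$, the error term and the range of validity transfer directly from \cite[Theorem~1.1]{San22}, completing the proof.
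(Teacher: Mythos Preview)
Your proposal misidentifies where the work in this lemma lies. According to the paper, \cite[Theorem~1.1]{San22} does \emph{not} deliver the error term $O(x/(\log x)^{12/11})$ directly; what it gives (for the Fibonacci sequence and odd $d$) is
\[
\#\mathcal{P}_d(x) = \delta(d)\,\Li(x) + O\!\left(\frac{d}{\varphi(d)}\cdot\frac{x\,(\log\log x)^{\omega(d)}}{(\log x)^{9/8}}\right),
\]
with the density already equal to $\delta(d)$. The entire content of the paper's proof is the analytic step of absorbing the $d$-dependent factors $d/\varphi(d)$ and $(\log\log x)^{\omega(d)}$ into the power of $\log x$: using $d \le (\log x)^{1/40}$ from the hypothesis $x \ge \exp(Bd^{40})$, one bounds $d/\varphi(d) \ll (\log x)^{\varepsilon}$ and $(\log\log x)^{\omega(d)} \le (\log x)^{1/40 + 2\varepsilon}$ via the standard estimates for $\varphi$ and $\omega$, and then $9/8 - 1/40 - 3\varepsilon > 12/11$ for $\varepsilon = 1/330$. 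Your claim that ``the exponents $12/11$ and $40$ are exactly the numerical constants of \cite[Theorem~1.1]{San22}'' and that the estimate ``is inherited verbatim and requires no new input'' is therefore incorrect, and as written your argument has a gap at precisely the point the lemma is meant to address.

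Conversely, the ``substantive step'' you focus on---checking that Sanna's density $\rho(d)$ coincides with $\delta(d)$---is not part of the paper's proof at all: the formula $\delta(d)$ is the Cubre--Rouse density, and \cite{San22} (which extends Cubre--Rouse to Lucas sequences with an error term) already produces it in this form for the Fibonacci case. So your Galois/entanglement discussion, while not wrong in spirit, is redundant here and does not substitute for the missing analytic reduction from the $9/8$-type error to the clean $12/11$ exponent.
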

\begin{proof}
From~\cite[Theorem~1.1]{San22} we have that there exists an absolute constant $B > 0$ such that
\begin{equation*}
\#\mathcal{P}_d(x) = \delta(d) \Li(x) + O\!\left(\frac{d}{\varphi(d)} \cdot \frac{x \, (\log \log x)^{\omega(d)}}{(\log x)^{9/8}}\right) ,
\end{equation*}
for all odd positive integers $d$ and for all $x \geq \exp(B d^{40})$, where $\varphi(d)$ and $\omega(d)$ are the Euler totient function and the number of prime factors of $d$, respectively.
Note that we can assume that $B$ (and consequently $x$) is sufficiently large.
In particular, we have that $d \leq (\log x)^{1/40}$.
Put $\varepsilon := 1/330$.
By the classic lower bound for $\varphi(d)$ (see, e.g.,~\cite[Ch.~I.5, Theorem~5.6]{MR3363366}) we have that
\begin{equation*}
\frac{d}{\varphi(d)} \ll \log \log d \ll \log \log \log x \leq (\log x)^{\varepsilon} .
\end{equation*}
Recall that $\omega(d) \leq \big(1 + o(1)\big) \log d / \log \log d$ as $d \to +\infty$ (see, e.g.,~\cite[Ch.~I.5, Theorem~5.5]{MR3363366}).
Therefore, there exists an absolute constant $C > 0$ such that if $d > C$ then
\begin{equation*}
\omega(d) \leq (1 + \varepsilon) \frac{\log d}{\log \log d} \leq \left(\frac1{40} + 2\varepsilon\right) \frac{\log \log x}{\log \log \log x} ,
\end{equation*}
and consequently $(\log \log x)^{\omega(d)} \leq (\log x)^{\frac1{40} + 2\varepsilon}$ .
Also, if $d \leq C$ then $(\log \log x)^{\omega(d)} \leq (\log x)^\varepsilon$.
The claim follows.
\end{proof}

\begin{remark}
In Lemma~\ref{lem:Pdx-asymp} the exponent $12/11$ can be replaced by $11/10 + \varepsilon$, for every $\varepsilon > 0$, assuming that $x$ is sufficiently large depending on $\varepsilon$.
\end{remark}

We also need an upper bound for $\#\mathcal{P}_d(x)$.

\begin{lemma}\label{lem:Pdx-upper}
We have
\begin{equation*}
\#\mathcal{P}_d(x) \ll \frac{x}{\varphi(d) \log (x / d)}
\end{equation*}
for all positive integers $d$ and for all $x > d$.
\end{lemma}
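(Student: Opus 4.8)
The plan is to reduce the count $\#\mathcal{P}_d(x)$ to a sum of counts of primes in arithmetic progressions, and then to apply the Brun--Titchmarsh inequality. First I would invoke Lemma~\ref{lem:basic}\ref{item5b}: for every prime $p \ne 5$ the Legendre symbol $(p/5)$ equals $\pm 1$, so $z(p) \mid p - (p/5)$ means that $z(p)$ divides $p - 1$ or $p + 1$. Hence, if $p \in \mathcal{P}_d$ and $p \ne 5$, then $d \mid z(p)$ forces $p \equiv 1 \pmod{d}$ or $p \equiv -1 \pmod{d}$. Writing $\pi(x; d, a)$ for the number of primes up to $x$ lying in the residue class $a \bmod d$, we obtain
\begin{equation*}
\#\mathcal{P}_d(x) \le 1 + \pi(x; d, 1) + \pi(x; d, -1) ,
\end{equation*}
where the additive $1$ accounts for the single possible prime $p = 5$.

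Next I would apply the classical Brun--Titchmarsh inequality in its uniform form, $\pi(x; d, a) \ll x / \big(\varphi(d) \log(x/d)\big)$, valid for all $x > d$ and all residues $a$ coprime to $d$. Both classes $a = 1$ and $a = -1$ qualify, since $\gcd(d - 1, d) = 1$, so the right-hand side of the displayed inequality is $O\!\big(x / (\varphi(d) \log(x/d))\big)$. Finally, the constant term is absorbed: with $t := x/d > 1$ and using $\varphi(d) \le d$, we have $x / (\varphi(d) \log(x/d)) \ge t/\log t \ge e$, because the function $t \mapsto t / \log t$ attains its minimum $e$ on $(1, \infty)$. Combining these bounds gives the claimed estimate.

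I do not expect any genuine obstacle: the argument is a single application of Lemma~\ref{lem:basic}\ref{item5b} followed by Brun--Titchmarsh. The only mild subtleties are the exceptional prime $p = 5$ and the degenerate behaviour of the target bound when $x$ is close to $d$, both dealt with above; one should also take care to cite the version of Brun--Titchmarsh that is uniform over the full range $x > d$, since here $d$ is allowed to be nearly as large as $x$.
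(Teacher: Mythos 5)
Your proposal is correct and follows essentially the same route as the paper: reduce to primes $p \equiv \pm 1 \pmod{d}$ via Lemma~\ref{lem:basic}\ref{item5b} (with an additive $1$ for the exceptional prime) and apply the Brun--Titchmarsh inequality. The extra care you take in absorbing the constant and in the range $x$ close to $d$ is a harmless elaboration of details the paper leaves implicit.
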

\begin{proof}
By Lemma~\ref{lem:basic}\ref{item5b}, we have that
\begin{align*}
\#\mathcal{P}_d(x) \leq 1 + \#\big\{p \leq x : p \equiv \pm 1 \!\!\!\pmod d \big\} \ll \frac{x}{\varphi(d) \log (x / d)} ,
\end{align*}
where we applied the Brun--Titchmarsh inequality~\cite[Ch.~I.4, Theorem~4.16]{MR3363366}.
\end{proof}

Now we give an upper bound for the sum of reciprocals of primes in $\mathcal{P}_d$.

\begin{lemma}\label{lem:reci}
We have
\begin{equation*}
\sum_{p \,\in\, \mathcal{P}_d(x)} \frac1{p} = \delta(d)\log \log x + O\!\left(\frac{\log(2d)}{\varphi(d)}\right)
\end{equation*}
for all odd positive integers $d$ and for all $x \geq 3$.
\end{lemma}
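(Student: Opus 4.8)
The plan is to estimate the sum by partial summation, feeding in the two available bounds for the counting function $\#\mathcal{P}_d(t)$: the asymptotic formula of Lemma~\ref{lem:Pdx-asymp}, valid once $t \geq x_0 := \exp(B d^{40})$, and the crude upper bound of Lemma~\ref{lem:Pdx-upper}, valid for every $t > d$. The case $d = 1$ is just Mertens' theorem (note $\delta(1) = 1$), so I would assume $d \geq 3$ throughout. Two elementary facts will be used repeatedly: $\delta(d) \ll \varphi(d)^{-1}$, which follows by writing $\delta(d) \leq \tfrac{5}{4}\, d^{-1}\prod_{p\mid d}(1-p^{-2})^{-1} = \tfrac{5}{4}\,\varphi(d)^{-1}\prod_{p\mid d}\tfrac{p}{p+1}$; and $\log\log x_0 = \log B + 40\log d \ll \log(2d)$.

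First I would handle the ``pre-asymptotic'' range $3 \leq x \leq x_0$. Here Lemma~\ref{lem:basic}\ref{item5b} shows that a prime $p \in \mathcal{P}_d$ with $p \neq 5$ satisfies $p \equiv \pm 1 \pmod d$; hence the primes of $\mathcal{P}_d$ not exceeding $2d$ are confined to $\{d-1, d+1, 2d-1\}$, together with $5$ only in the single case $d = 5$, and so contribute $O(\varphi(d)^{-1})$ to the sum. For the primes in $(2d, x]$, partial summation with Lemma~\ref{lem:Pdx-upper} bounds the tail by $\ll \varphi(d)^{-1}\log\log x$, after the substitution $u = t/d$ in the resulting integral $\int_{2d}^{x}\big(t\log(t/d)\big)^{-1}\,\mathrm{d}t$. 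Since $\log\log x \leq \log\log x_0 \ll \log(2d)$, both $\sum_{p \in \mathcal{P}_d(x)} \tfrac1p$ and $\delta(d)\log\log x$ are $O(\log(2d)/\varphi(d))$ in this range, and so the asserted identity holds trivially there.

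Next, for $x > x_0$, I would split the sum at $x_0$. The part up to $x_0$ equals $\delta(d)\log\log x_0 + O(\log(2d)/\varphi(d))$ by the previous step. For the part over $(x_0, x]$, write $\#\mathcal{P}_d(t) = \delta(d)\Li(t) + E(t)$, where by Lemma~\ref{lem:Pdx-asymp} there is an absolute constant $c > 0$ with $|E(t)| \leq c\, t (\log t)^{-12/11}$ for all $t \geq x_0$, and apply partial summation: the main term contributes $\delta(d)\int_{x_0}^{x}(t\log t)^{-1}\,\mathrm{d}t = \delta(d)(\log\log x - \log\log x_0)$, while the error (the two boundary terms $E(x)/x$, $E(x_0)/x_0$ together with $\int_{x_0}^{x} E(t)\,t^{-2}\,\mathrm{d}t$) is $\ll (\log x_0)^{-1/11}$, using $\int_{x_0}^{\infty}\big(t(\log t)^{12/11}\big)^{-1}\,\mathrm{d}t = 11(\log x_0)^{-1/11}$. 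Since $(\log x_0)^{-1/11} = (B d^{40})^{-1/11} \ll d^{-1} \leq \varphi(d)^{-1} \ll \log(2d)/\varphi(d)$, adding the two pieces makes the $\delta(d)\log\log x_0$ contributions cancel, and the claimed formula follows.

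The only real obstacle is the restriction $x \geq \exp(B d^{40})$ attached to Lemma~\ref{lem:Pdx-asymp}: the argument works precisely because on the complementary range $\log\log x = O(\log(2d))$, which makes the target identity vacuous there, and because on the main range $\log x_0$ is polynomially large in $d$, so the $(\log t)^{-12/11}$-type error, after integration, stays comfortably below $\varphi(d)^{-1}$. Everything else is routine bookkeeping with partial summation.
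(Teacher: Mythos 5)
Your proof is correct and follows essentially the same route as the paper: split at $x_0 = \exp(Bd^{40})$, observe that both the sum and $\delta(d)\log\log x$ are $O(\log(2d)/\varphi(d))$ below $x_0$, and apply partial summation with Lemma~\ref{lem:Pdx-asymp} above it. The only cosmetic differences are that the paper handles the pre-asymptotic range by citing a known estimate for $\sum_{p \equiv \pm 1 \!\pmod d} 1/p$ rather than re-deriving it from Brun--Titchmarsh, and it splits the partial-summation integral over $[1,x]$ into three ranges instead of splitting the sum at $x_0$.
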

\begin{proof}
First, suppose that $x < \exp(Bd^{40})$, where $B$ is the constant of Lemma~\ref{lem:Pdx-asymp}. 
Hence, we have that
\begin{equation*}
\delta(d)\log \log x \ll \frac{\log \log x}{d} \ll \frac{\log(2d)}{d} .
\end{equation*}
Moreover, by~\cite[Theorem~1 and Remark~1]{MR447087}, we have that
\begin{equation*}
\sum_{\substack{p \,\leq\, x \\[1pt] p \,\equiv\, \pm 1 \!\!\!\!\pmod d}} \frac1{p} = \frac{2\log \log x}{\varphi(d)} + O\!\left(\frac{\log(2d)}{\varphi(d)}\right) .
\end{equation*}
This together with Lemma~\ref{lem:basic}\ref{item5b} yields that
\begin{equation*}
\sum_{p \,\in\, \mathcal{P}_d(x)} \frac1{p} \leq \frac1{d} + \sum_{\substack{p \,\leq\, x \\[1pt] p \,\equiv\, \pm 1 \!\!\!\!\pmod d}} \frac1{p} \ll \frac1{d} + \frac{\log (2d)}{\varphi(d)} .
\end{equation*}
Hence, the claim follows.
Now suppose that $x \geq \exp(Bd^{40})$.
By partial summation, we have that
\begin{equation*}
\sum_{p \,\in\, \mathcal{P}_d(x)} \frac1{p} = \frac{\#\mathcal{P}_d(x)}{x} + \int_1^x \frac{\#\mathcal{P}_d(t)}{t^2} \,\mathrm{d} t .
\end{equation*}
Obviously, $\#\mathcal{P}_d(x) / x \ll 1/d$ by the trivial inequality.
Thus it remains to bound the integral.
By Lemma~\ref{lem:basic}\ref{item5b}, we have that
\begin{equation*}
\int_1^{2d} \frac{\#\mathcal{P}_d(t)}{t^2} \,\mathrm{d} t \leq \frac1{d^{2}}\int_1^{2d-2} 5 \,\mathrm{d} t \ll \frac1{d},
\end{equation*} after noticing that $\#\mathcal{P}_d(t)>0$ only if $t\ge d-1$.
By Lemma~\ref{lem:Pdx-upper}, we have that
\begin{equation*}
\int_{2d}^{\exp(Bd^{40})} \frac{\#\mathcal{P}_d(t)}{t^2} \,\mathrm{d} t 
    \ll \int_{2d}^{\exp(Bd^{40})} \frac{\mathrm{d} t}{\varphi(d) \,t \log(t / d)} = \left[ \frac{\log\log(t/d)}{\varphi(d)}\right]_{t \,=\, 2d}^{\exp(Bd^{40})} \ll \frac{\log d}{\varphi(d)}.
\end{equation*}
By Lemma~\ref{lem:Pdx-asymp}, we have that
\begin{align*}
\int_{\exp(Bd^{40})}^x \frac{\#\mathcal{P}_d(t)}{t^2} \,\mathrm{d} t 
    &= \int_{\exp(Bd^{40})}^x \frac{\delta(d) \Li(t)}{t^2} \,\mathrm{d} t + O\!\left(\int_{\exp(Bd^{40})}^x \frac{\mathrm{d}t}{t (\log t)^{12/11}} \right) \\
    &= \delta(d) \left[\log \log t - \frac{\Li(t)}{t} \right]_{t \,=\, \exp(Bd^{40})}^x + O\!\left( \frac1{d^{40/11}} \right) \\
    &= \delta(d) \left(\log \log x + O(\log d) \right) + O\!\left( \frac1{d^{40/11}} \right) \\
    &= \delta(d) \log \log x + O\!\left(\frac{\log d}{d} \right) .
\end{align*}
Putting these together, the claim follows.
\end{proof}

The following sieve result is a special case of \cite[Theorem~7.2]{MR0424730} (cf.~\cite[Lemma~2.2]{MR4356164}).

\begin{lemma}\label{lem:sieve} 
We have
\begin{equation*}
 \#\{n\le x : p\mid n \Rightarrow p \notin \mathcal{P}\} \ll x \prod_{p \,\in\, \mathcal{P}(x)} \left(1 - \frac{1}{p}\right) ,
\end{equation*}
for all $x \ge 2$ and for all sets of prime numbers $\mathcal{P}$.
\end{lemma}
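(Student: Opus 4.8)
The plan is to read the left-hand side as a sieve sifting function and to invoke the upper-bound sieve \cite[Theorem~7.2]{MR0424730}, the only real point being that the relevant sieve dimension is at most $1$ \emph{uniformly} in $\mathcal{P}$. First I would note that every prime factor of an integer $n \le x$ is itself at most $x$, so the condition ``$p \mid n \Rightarrow p \notin \mathcal{P}$'' is the same as asking that $n$ have no prime factor in $\mathcal{P}(x)$. Discarding the primes of $\mathcal{P}$ above a threshold $z$ can only enlarge the counted set, so for every $z \ge 2$,
\begin{equation*}
\#\{n\le x : p\mid n \Rightarrow p \notin \mathcal{P}\} \le S(x,z) := \#\{n \le x : p \mid n,\ p < z \Rightarrow p \notin \mathcal{P}\},
\end{equation*}
and $S(x,z)$ is exactly the sifting function of $\mathcal{A} := \{n \le x\}$ by the primes of $\mathcal{P}$ below $z$.

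Next I would record the sieve data. For a squarefree $d$ composed of primes of $\mathcal{P}$ one has $\#\{n \le x : d \mid n\} = x/d + R_d$ with $|R_d| \le 1$, so the density is $X = x$ and the local density function is $w(p) = 1$ for $p \in \mathcal{P}$ and $w(p) = 0$ otherwise. Since $w(p) \le 1$, Mertens' theorem gives, with an absolute implied constant,
\begin{equation*}
\sum_{u \le p < z} \frac{w(p)\log p}{p} \le \sum_{u \le p < z} \frac{\log p}{p} = \log\frac{z}{u} + O(1),
\end{equation*}
so the sieve has dimension $\kappa \le 1$ for \emph{every} choice of $\mathcal{P}$. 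I would then take $z = x^{1/3}$ and sieve level $y = x^{2/3}$, so that $s := \log y/\log z = 2$ is an absolute constant; because $\kappa \le 1$ and $s$ is bounded, \cite[Theorem~7.2]{MR0424730} yields
\begin{equation*}
S(x,z) \ll x \prod_{\substack{p < z \\ p \in \mathcal{P}}}\Big(1 - \frac1p\Big) + \sum_{d < y} |R_d| ,
\end{equation*}
with an absolute implied constant.

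It remains to dispose of the remainder and to pass from the truncated product to the full one. Since $|R_d| \le 1$, the remainder is $\sum_{d<y}|R_d| \le y = x^{2/3}$, which is $o(x/\log x)$ and hence negligible, because the main term is $\gg x\prod_{p\le x}(1-1/p) \gg x/\log x$. For the product, Mertens' theorem gives that the omitted factor satisfies
\begin{equation*}
\prod_{\substack{z \le p \le x \\ p \in \mathcal{P}}}\Big(1-\frac1p\Big) \ge \prod_{z \le p \le x}\Big(1-\frac1p\Big) \gg \frac{\log z}{\log x} = \frac13 ,
\end{equation*}
so that $\prod_{p < z,\, p \in \mathcal{P}}(1-1/p) \asymp \prod_{p \in \mathcal{P}(x)}(1-1/p)$ up to absolute constants. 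Combining the three displays proves the bound for all large $x$, while for bounded $x$ it is trivial. The main obstacle --- and the reason a black-box sieve suffices here --- is ensuring that the implied constant is independent of $\mathcal{P}$; this is exactly what the uniform bound $\kappa \le 1$ secures, and it holds for every $\mathcal{P}$ simply because each prime sieves out at most one residue class.
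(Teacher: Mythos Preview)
Your proposal is correct and follows exactly the approach the paper indicates: the paper gives no argument beyond citing \cite[Theorem~7.2]{MR0424730} (with a cross-reference to \cite[Lemma~2.2]{MR4356164}), and you have simply unpacked that citation by verifying the sieve data, the uniform dimension bound $\kappa\le1$, and the parameter choices needed to absorb the remainder and extend the product to all of $\mathcal{P}(x)$. There is nothing to add.
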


\section{Proof of Theorem~\ref{thm:ubound}}

Suppose that $x > 0$ is sufficiently large, and put
\begin{equation*}
    k := \left\lfloor \frac1{\log 5} \log\!\left(\frac{25}{24\log 5} \cdot \frac{\log \log x}{\log \log \log x}\right)\right\rfloor 
\end{equation*}
and $d := 5^k$.
Note that $\delta(d) = 5^{-k} \cdot 25 / 24$.
Hence, we get that
\begin{equation*}
    \log \!\left(\frac{\log d}{\delta(d)}\right) = k \log 5 + \log k + \log\!\left(\frac{24 \log 5}{25}\right) \leq \log \log \log x .
\end{equation*}
Therefore, we have that $(\log d) / \delta(d) \leq \log \log x$ and
\begin{equation}\label{equ:optimize}
    (\log x)^{\delta(d)} \geq d \gg \frac{\log \log x}{\log \log \log x} .
\end{equation}
We split $\mathcal{A}$ into two subsets: $\mathcal{A}_1$ is the subset of $\mathcal{A}$ consisting of integers without prime factors in $\mathcal{P}_d$, and $\mathcal{A}_2 := \mathcal{A} \setminus \mathcal{A}_1$.

First, we give an upper bound on $\#\mathcal{A}_1(x)$.
By Lemma~\ref{lem:sieve} and Lemma~\ref{lem:reci}, we get that 
\begin{equation}\label{equ:A1-bound}
    \#\mathcal{A}_1(x) \ll x \prod_{p \,\in\, \mathcal{P}_d(x)} \left(1-\frac{1}{p}\right) 
    \ll x \exp\!\left(-\!\!\sum_{p \,\in\, \mathcal{P}_d(x)} \frac1{p}\right)
    \ll \frac{x}{(\log x)^{\delta(d)}} ,
\end{equation}
where we also used the inequality $1 - x \leq \exp(-x)$, which holds for $x \geq 0$.

Now we give an upper bound on $\#\mathcal{A}_2(x)$.
If $n \in \mathcal{A}_2$ then $n$ has a prime factor $p \in \mathcal{P}_d$.
Hence, we have that $p \mid n$ and $d \mid z(p)$.
Thus, by Lemma~\ref{lem:basic}\ref{item1b}, we get that $z(p) \mid z(n)$ and so $d \mid \ell(n)$.
Recalling that $d = 5^k$, by Lemma~\ref{lem:basic}\ref{item7b} we have that $\ell(d) = d$.
Hence, we get that $\ell(d) \mid \ell(n)$ and, by Lemma~\ref{lem:basic}\ref{item4b}, it follows that $d \mid n$.
Thus all the elements of $\mathcal{A}_2$ are multiples of $d$.
Consequently, we have that
\begin{equation}\label{equ:A2-bound}
    \#\mathcal{A}_2(x) \leq \frac{x}{d} .
\end{equation}
Therefore, putting together~\eqref{equ:A1-bound} and~\eqref{equ:A2-bound}, and using~\eqref{equ:optimize}, we obtain that
\begin{equation*}
    \#\mathcal{A}(x) = \#\mathcal{A}_1(x) + \#\mathcal{A}_2(x) \ll \frac{x}{(\log x)^{\delta(d)}} + \frac{x}{d} \ll \frac{x \log \log \log x}{\log \log x} ,
\end{equation*}
as desired.
The proof is complete.

\section{Numerical computations}\label{sec:numerical}

We computed the elements of $\mathcal{A} \cap [1, 10^6]$ by using a program written in \textsf{C} that employs Lemma~\ref{lem:basic}\ref{item3b}. 
Note that computing $g\big(\ell(n)\big)$ directly as $\gcd\!\big(\ell(n), F_{\ell(n)}\big)$ would be prohibitive, in light of the exponential grown of Fibonacci numbers.
Instead, we used the fact that 
\begin{equation*}
g\big(\ell(n)\big) = \gcd\!\big(\ell(n), F_{\ell(n)} \bmod \ell(n)\big),
\end{equation*}
and we computed Fibonacci numbers modulo an integer by efficient matrix exponentiation.

\bibliographystyle{amsplain-no-bysame}
\bibliography{main}

\providecommand{\bysame}{\leavevmode\hbox to3em{\hrulefill}\thinspace}
\providecommand{\MR}{\relax\ifhmode\unskip\space\fi MR }
\providecommand{\MRhref}[2]{%
  \href{http://www.ams.org/mathscinet-getitem?mr=#1}{#2}
}
\providecommand{\href}[2]{#2}
\begin{thebibliography}{10}

\bibitem{OEIS}
OEIS Foundation~Inc. (2022), The On-Line Encyclopedia of Integer Sequences,
  Published electronically at \url{https://oeis.org}.

\bibitem{MR2928495}
J.~J. Alba~Gonz\'{a}lez, F.~Luca, C.~Pomerance, and I.~E. Shparlinski, \emph{On
  numbers {$n$} dividing the {$n$}th term of a linear recurrence}, Proc. Edinb.
  Math. Soc. (2) \textbf{55} (2012), no.~2, 271--289.

\bibitem{MR1131414}
R.~Andr\'{e}-Jeannin, \emph{Divisibility of generalized {F}ibonacci and {L}ucas
  numbers by their subscripts}, Fibonacci Quart. \textbf{29} (1991), no.~4,
  364--366.

\bibitem{MR3251719}
P.~Cubre and J.~Rouse, \emph{Divisibility properties of the {F}ibonacci entry
  point}, Proc. Amer. Math. Soc. \textbf{142} (2014), no.~11, 3771--3785.

\bibitem{MR0424730}
H.~Halberstam and H.-E. Richert, \emph{Sieve methods}, London Mathematical
  Society Monographs, No. 4, Academic Press [Harcourt Brace Jovanovich,
  Publishers], London-New York, 1974.

\bibitem{Jha21}
A.~Jha, \emph{On terms in a dynamical divisibility sequence having a fixed
  {G.C.D.} with their index}, \url{https://arxiv.org/abs/2105.06190}.

\bibitem{JhaNath22}
A.~Jha and A.~Nath, \emph{The distribution of {G.C.D.}s of shifted primes and
  {L}ucas sequences}, \url{https://arxiv.org/abs/2207.00825}.

\bibitem{JhaSanna22}
A.~Jha and C.~Sanna, \emph{Greatest common divisors of shifted primes and
  {F}ibonacci numbers}, \url{https://arxiv.org/abs/2204.05161}.

\bibitem{MR4017936}
S.~Kim, \emph{The density of the terms in an elliptic divisibility sequence
  having a fixed {G}.{C}.{D}. with their indices}, J. Number Theory
  \textbf{207} (2020), 22--41, With an appendix by M. Ram Murty.

\bibitem{MR3859754}
P.~Leonetti and C.~Sanna, \emph{On the greatest common divisor of {$n$} and the
  {$n$}th {F}ibonacci number}, Rocky Mountain J. Math. \textbf{48} (2018),
  no.~4, 1191--1199.

\bibitem{MR3409327}
F.~Luca and E.~Tron, \emph{The distribution of self-{F}ibonacci divisors},
  Advances in the theory of numbers, Fields Inst. Commun., vol.~77, Fields
  Inst. Res. Math. Sci., Toronto, ON, 2015, pp.~149--158.

\bibitem{MR3983305}
D.~Mastrostefano, \emph{An upper bound for the moments of a gcd related to
  {L}ucas sequences}, Rocky Mountain J. Math. \textbf{49} (2019), no.~3,
  887--902.

\bibitem{MR3896876}
D.~Mastrostefano and C.~Sanna, \emph{On numbers {$n$} with polynomial image
  coprime with the {$n$}th term of a linear recurrence}, Bull. Aust. Math. Soc.
  \textbf{99} (2019), no.~1, 23--33.

\bibitem{MR4356164}
P.~Pollack, \emph{Numbers which are orders only of cyclic groups}, Proc. Amer.
  Math. Soc. \textbf{150} (2022), no.~2, 515--524.

\bibitem{MR447087}
C.~Pomerance, \emph{On the distribution of amicable numbers}, J. Reine Angew.
  Math. \textbf{293(294)} (1977), 217--222.

\bibitem{San22}
C.~Sanna, \emph{On the divisibility of the rank of appearance of a {L}ucas
  sequence}, Int. J. Number Theory, online ready,
  \url{https://doi.org/10.1142/S1793042122501093}.

\bibitem{MR3606950}
C.~Sanna, \emph{On numbers {$n$} dividing the {$n$}th term of a {L}ucas
  sequence}, Int. J. Number Theory \textbf{13} (2017), no.~3, 725--734.

\bibitem{MR3825473}
C.~Sanna, \emph{The moments of the logarithm of a {G}.{C}.{D}. related to
  {L}ucas sequences}, J. Number Theory \textbf{191} (2018), 305--315.

\bibitem{MR3935356}
C.~Sanna, \emph{On numbers {$n$} relatively prime to the {$n$}th term of a
  linear recurrence}, Bull. Malays. Math. Sci. Soc. \textbf{42} (2019), no.~2,
  827--833.

\bibitem{MR3800663}
C.~Sanna and E.~Tron, \emph{The density of numbers {$n$} having a prescribed
  {G}.{C}.{D}. with the {$n$}th {F}ibonacci number}, Indag. Math. (N.S.)
  \textbf{29} (2018), no.~3, 972--980.

\bibitem{MR1271392}
L.~Somer, \emph{Divisibility of terms in {L}ucas sequences by their
  subscripts}, Applications of {F}ibonacci numbers, {V}ol. 5 ({S}t. {A}ndrews,
  1992), Kluwer Acad. Publ., Dordrecht, 1993, pp.~515--525.

\bibitem{MR3363366}
G.~Tenenbaum, \emph{Introduction to analytic and probabilistic number theory},
  third ed., Graduate Studies in Mathematics, vol. 163, American Mathematical
  Society, Providence, RI, 2015, Translated from the 2008 French edition by
  Patrick D. F. Ion.

\bibitem{MR4159096}
E.~Tron, \emph{The greatest common divisor of linear recurrences}, Rend. Semin.
  Mat. Univ. Politec. Torino \textbf{78} (2020), no.~1, 103--124.

\end{thebibliography}

\end{document}